\newtheorem{theo}{Theorem}[section]
\newtheorem{prop}[theo]{Proposition}
\newtheorem{lem}[theo]{Lemma}
\newtheorem{rem}[theo]{Remark}
\newenvironment{proof}[1][Proof]{\noindent \textbf{#1.~}}
{\hfill $\Box$} 
\newcommand{\C}{\mathbf{C}}
\renewcommand{\P}{\mathbf{P}}
\renewcommand{\O}{\mathcal{O}}
\renewcommand{\H}{\mathrm{H}}
\newcommand{\h}{{h}}
\newcommand{\Sym}{\mathop{\mathrm{Sym}}\nolimits}
\newcommand{\PGL}{\mathop{\mathrm{PGL}}\nolimits}
\newcommand{\zero}{^{\circ}}
\renewcommand{\mod}{\mathscr}
\newcommand{\hilb}{\mathcal}
\renewcommand{\epsilon}{\varepsilon}
\renewcommand\paragraph{\@startsection{paragraph}{4}{\z@}%
                                    {-3.25ex \@plus -1ex \@minus -.2ex}%
                                    {6pt \@plus 2pt \@minus .4pt}%
                                    {\normalfont\normalsize\bfseries}}
\renewcommand\subparagraph{\@startsection{subparagraph}{3}{\z@}%
{-9pt \@plus 3pt \@minus .6pt}%
{3pt \@plus 1pt \@minus .2pt}%
{\normalfont\normalsize}%
}
\renewcommand\section{\@startsection {section}{1}{\z@}%
                                   {-3.2ex \@plus -1ex \@minus -.2ex}%
                                   {2.3ex \@plus.2ex}%
                                   {\normalfont\Large\bfseries}}
\title{On universal Severi varieties of low genus $K3$ surfaces}
\author{Ciro Ciliberto and Thomas Dedieu}
\renewcommand\@maketitle{%
  \newpage
  \null
  \vskip 2em%
  \begin{center}%
  \let \footnote \thanks
    {\Large \textbf \@title \par}%
    \vskip 1em%
    {\large
      \lineskip .5em%
      \begin{tabular}[t]{c}%
        \@author
      \end{tabular}\par}%
    \vskip 0.5em%
  \end{center}%
  \par
  \vskip 1.5em}
\begin{document}

\maketitle

\begin{abstract}
We prove the irreducibility of  universal Severi varieties
parametrizing irreducible, reduced, nodal hyperplane sections of
primitive $K3$ surfaces of genus $g$, with $3\le g \le 11$,
$g\neq 10$. 
\end{abstract}

\section*{Introduction}

F. Severi was one of the first algebraic geometers who
stressed the importance, for moduli and enumerative problems, 
of studying  the families  $V_{d,g}$ of irreducible, nodal, plane
curves of degree $d$ and geometric genus $g$
(see e.g.  \cite [Anhang F] {Sev}). 
In particular he proved that the varieties $V_{d,g}$ all have the
\emph{expected dimension} $3d+g-1$ (equal to the dimension of the
linear system  of all curves of degree $d$ minus the number of nodes)
and asserted, but did not succeed to prove,  that
they are irreducible, a result due to J. Harris in 
\cite{harris}.  For this reason,  the $V_{d,g}$'s have been called
 \emph{Severi varieties}.

The notion of Severi variety can be extended to families of nodal
curves on any surface, and 
analogous irreducibility problems naturally arise. These are in general
hard questions, even for rational surfaces.
For instance,  irreducibility is known to hold for Hirzebruch surfaces
\cite{tyomkin} and for rational curves on Del Pezzo's surfaces
\cite{testa},  with one notable (and understood) exception 
for  Del Pezzo's surfaces of degree 1. 
On the other hand, for surfaces with positive canonical bundle, 
Severi varieties have in general a quite unpredictable behaviour:
examples are given in \cite{chiantini-ciliberto} of surfaces with
reducible Severi varieties, and even with components of Severi
varieties of dimension different from the expected one.

In this note we concentrate on Severi varieties on $K3$ surfaces, which,
as in the planar case, are quite interesting in relation with modular
and enumerative problems.
Here one can consider \emph{universal} Severi varieties parametrizing
irreducible, reduced, nodal curves on primitive
$K3$ surfaces, i.e. those polarised  by an indivisible,  ample line
bundle of  genus $g$. 
Conjecturally, all these varieties should be irreducible 
(see \S \ref{ssec:USV}). This however seems, at the moment, to be
quite hard to prove. 
Our result is Theorem \ref {theo:main}  asserting the irreducibility
in the range $3\le g \le 11$, $g\neq 10$.  This partially
affirmatively answers  a question posed 
by the second author in \cite {dedieu} and it is, as far as we know,
the first irreducibility 
result for Severi varieties of $K3$ surfaces.
In a nutshell, the proof relies on two facts. 
First, in the asserted range, 
 the hyperplane sections  with a given number
 $\delta$ of nodes of  the $K3$'s in question, embedded in $\P^ g$
 as surfaces of degree $2g-2$,
fill up the whole component of nodal degenerate canonical curves with
$\delta$ nodes in the Hilbert scheme
(see Proposition \ref {prop:fibres} or \cite {fkps}).  Secondly, using a 
degeneration technique due to Pinkham \cite {pinkham}, we prove that
all components of a certain flag Hilbert scheme pass through
some \emph{cone points}, where, on the other hand, we are able to prove
smoothness of the flag Hilbert scheme, which is then irreducible at
those points. 
Both ideas are inspired by \cite{ clm93, cm90}. 

In \S \ref {sec:K3} we recall general facts about $K3$ surfaces (see
\S \ref {ssec:gen}); some basics   
 about Severi varieties on them, like existence and dimensions (see
\S \ref {ssec:severi}); about universal Severi varieties, 
 recently considered in  \cite {fkps} (see \S \ref {ssec:USV}), and
 related moduli problems (see \S  
 \ref {ssec:moduli}). Statement and proof of Theorem \ref {theo:main}
 are in \S \ref {sec:irreducibility}. 
We take a Hilbert schematic viewpoint, which
 we set up in  \S  \ref {sec:setting}. We recall then
 Pinkham's technique of degeneration to cones in \S \ref {ssec:defo},
 and the use of graph curves to compute cohomology of normal bundles
 in \S \ref {ssec:graph}. Applying  this machinery,  the proof,
 presented in \S \ref {ssec:proof}, turns out to be quite simple.

\smallskip
{\bf Acknowledgements}
The second author wishes to thank the Groupement de Recherche europ\'een
Italo-Fran\c cais en G\'eom\'etrie Alg\'ebrique (CNRS and INdAM) 
for funding his stay at the university of Roma Tor Vergata during part
of the preparation of this work.

\section{$K3$ surfaces and their Severi varieties}\label{sec:K3}

\subsection{Generalities}\label {ssec:gen}

A \emph{$K3$ surface} $X$ is a smooth  complex
projective surface with $\Omega_X^2\cong \mathcal O_X$
and $h^1(X,\mathcal O_X)=0$.
A \emph{primitive $K3$ surface of genus $g$} is a pair $(X,L)$, where
$X$ is a $K3$ surface, and $L$ is an indivisible, nef line bundle on
$X$, such that $|L|$ is without fixed component and $L^2=2g-2$ (hence
$g\ge 1$).
Given such a pair, $|L|$ is base point free, and the morphism
$\varphi_{|L|}$ determined by this linear system is
birational if and only if $L^2>0$ and $|L|$ does not contain any
hyper\-el\-liptic curve (hence $g \ge 3$).
In the latter case, the image of $\varphi_{|L|}$ is a surface of
degree $2g-2$ in $\P^g$, with canonical singularities, and whose
general hyperplane section is a \emph{canonical curve} of genus $g$
(see \cite{saint-donat}).

For all $g\ge 2$, we can consider the \emph{moduli stack}
$\mod{B}_g$ of primitive $K3$ surfaces of genus $g$, which is smooth,
of dimension 19 (see \cite {BHPV04,palaiseau}). 
For  $(X,L)$ very general in $\mod{B}_g$, the Picard group of $X$ is
generated by the class of $L$, and $L$ is very ample if $g\ge 3$.

\subsection{Severi varieties}\label{ssec:severi}
Given a $K3$ surface  $(X,L)$ of genus $g$ and two integers $k$
and $h$,  consider
\begin{equation*}
V_{k,h}(X,L):=\left\{ C\in\vert kL\vert\ \mbox{ irreducible and
    nodal with}\ 
  g(C)=h \right\},
\end{equation*}
where $g(C)$ is the \emph{geometric genus} of $C$, i.e. the genus of its normalization, so that
$C$ has $g-h$ nodes. 
$V_{k,h}(X,L)$, called the
\emph{$(k,h)$--Severi variety} of $(X,L)$ (or simply \emph{Severi
  variety} if there is no danger of confusion),  is a functorially
defined, locally closed subscheme of the projective space $\vert
kL\vert$ of dimension $1+k^2(g-1)=:p_a(k)$, which is the arithmetic
genus 
of the curves in $\vert kL\vert$.
We will drop the index $k$ if $k=1$ and we 
may drop the indication of the pair $(X,L)$ if there is no danger of
confusion. 

\begin{theo}
\label{theo:exp-descr} Let 
$k\ge 1$ and $0\le h \le p_a(k)$.
The variety $V_{k,h}$, if not empty, is smooth of dimension $h$.
If $(X,L)$ is general in $\mod{B}_g$, then $V_{k,h}$ is not empty.
\end{theo}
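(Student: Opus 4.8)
The plan is to establish the three assertions of Theorem~\ref{theo:exp-descr} in turn: smoothness, the dimension count, and non-emptiness for general $(X,L)$, since the first two are essentially deformation-theoretic and standard, while the third is the substantive point.

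\textbf{Smoothness and dimension.} Let $C\in V_{k,h}$, with normalization $\nu\colon\widetilde C\to C$; write $\delta=p_a(k)-h$ for the number of nodes. The first step is to recall the standard identification of the tangent space to the equisingular (here, equinodal) stratum. Since the nodes impose independent conditions, the Severi variety $V_{k,h}$ is, locally near $C$, the subscheme of $|kL|$ of curves with $\delta$ nodes, and its Zariski tangent space at $C$ is $H^0(X,\mathcal N_{C/X}\otimes\mathcal I_Z)/H^0(X,\mathcal O_X)$, where $Z$ is the node scheme and $\mathcal N_{C/X}=\mathcal O_C(C)$. Using $K_X=\mathcal O_X$ and the adjunction sequence, one computes $\mathcal N_{C/X}\otimes\mathcal I_Z\cong\nu_*\omega_{\widetilde C}$ up to the relevant twist, so that $h^0$ of this sheaf equals $h^0(\widetilde C,\omega_{\widetilde C})=h$ by Riemann--Roch on $\widetilde C$ (here one also uses $h^1(X,\mathcal O_X)=0$ to handle the obstruction/ambient cohomology). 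This gives $\dim_C V_{k,h}\le h$. For the reverse inequality and smoothness, I would invoke the expected-dimension bound for Severi varieties on surfaces with $K_X$ trivial or effective: each node drops the dimension by at most one, so $\dim_C V_{k,h}\ge p_a(k)-\delta=h$; combined with the upper bound this forces equality and smoothness at $C$. (This is where the $K3$ hypothesis $K_X\cong\mathcal O_X$ is used in an essential, classical way — it is exactly Mumford's / Tannenbaum's argument.)

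\textbf{Non-emptiness for general $(X,L)$.} This is the main obstacle and requires an existence construction. The strategy is a degeneration: it suffices to exhibit \emph{one} primitive $K3$ surface $(X_0,L_0)$ of genus $g$ carrying an irreducible nodal curve in $|kL_0|$ with exactly $\delta$ nodes, and then spread out. Indeed, by the dimension-count part, $V_{k,h}$ is smooth of the expected dimension wherever it is non-empty, hence unobstructed, so the locus in $\mod B_g$ over which $V_{k,h}\neq\varnothing$ is open; by irreducibility of $\mod B_g$ it is then either empty or contains the general point. For the required single example I would use an elliptic $K3$ surface: take $X_0\to\P^1$ elliptic with a section $E$ and fiber class $F$, so that $|kL_0|$ with $L_0=E+(g+1)F$ (or the appropriate primitive polarization of genus $g$) contains reducible members built from the section and fibers; a standard degeneration-to-a-nodal-configuration argument (in the spirit of \cite{clm93,cm90}) then lets one smooth such a configuration partially so as to produce an irreducible curve with precisely the prescribed number of nodes. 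Alternatively, one invokes the existence results for rational curves on $K3$ surfaces (the $h=0$ case) together with the fact that one can always \emph{un-node}, i.e.\ pass from $h$ to $h+1$ by smoothing one node, which keeps us inside $|kL|$ and raises the geometric genus by one; iterating downward from a known $h=p_a(k)$ (a general smooth curve in $|kL|$) or upward from $h=0$ covers the full range $0\le h\le p_a(k)$.

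\textbf{Where the difficulty lies.} The smoothness and dimension statements are formal once the $K3$ condition is in hand; the genuine content is the non-emptiness, and within that, the delicate point is controlling the \emph{number} of nodes in the degeneration — producing an irreducible curve with exactly $\delta$ nodes and no worse singularities, for every $\delta$ in range. I expect the cleanest route is the inductive smoothing of a single node (so that it suffices to treat the extreme case $h=0$, i.e.\ the existence of rational curves, which is classical — due to Mori--Mukai, Bogomolov--Mumford, and Chen), combined with the openness/irreducibility argument on $\mod B_g$ to pass from the special $(X_0,L_0)$ to the general member. One must also check that the smoothed curve remains irreducible at each step, which follows because a small deformation of an irreducible curve is irreducible.
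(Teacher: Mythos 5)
The paper offers no proof of this statement: the smoothness/dimension assertion is attributed to classical deformation theory (Severi, Tannenbaum, Chiantini--Ciliberto, Dedieu) and the non-emptiness to Chen's theorem together with Mumford's argument in \cite{BHPV04}. Your first part is the standard argument those references contain and is essentially correct: the Zariski tangent space to $V_{k,h}$ at a nodal $C$ is $\H^0(C,\omega_C\otimes\mathcal I_Z)=\H^0(\widetilde C,\nu_*\omega_{\widetilde C})$, of dimension $h$ (adjunction plus $K_X\cong\O_X$), while each node drops the local dimension by at most one, so $\dim_C V_{k,h}\ge p_a(k)-\delta=h$; equality forces smoothness. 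Up to a small notational slip in the description of the tangent space, this is fine.

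The non-emptiness part, however, has two genuine gaps. First, the spreading-out step: from ``$V_{k,h}(X_0,L_0)$ is smooth of dimension $h$ wherever non-empty'' you infer that the locus in $\mod{B}_g$ where $V_{k,h}\neq\varnothing$ is open. This is a non sequitur: smoothness of the fibre concerns deformations of $C$ inside a \emph{fixed} $X_0$ and says nothing about whether the pair $(X_0,C)$ deforms when $X_0$ moves in $\mod{B}_g$ (note that the naive obstruction space $\H^1(C,\nu_*\omega_{\widetilde C})$ does not vanish). What is needed is the relative statement — smoothness of the universal Severi variety $\mod{V}^g_{k,h}$ of dimension $19+h$ over $\mod{B}_g\zero$, i.e.\ \cite[Proposition 4.8]{fkps}, or Mumford's deformation argument for the pair — from which, together with the fibre dimension $h$, one gets that every component dominates $\mod{B}_g$. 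Second, and more seriously, after reducing to $h=0$ by node-smoothing (itself a standard but not free step: one must know the general member of the stratum preserving a subset of the nodes acquires no further singularities), the existence of an \emph{irreducible nodal} rational curve in $|kL|$ for $k\ge 2$ on a general primitive $K3$ is precisely the main theorem of \cite{chen}, which is deep. The Bogomolov--Mumford section-plus-fibres construction on an elliptic $K3$ handles $k=1$; for $k\ge 2$ the analogous members of $|kL_0|$ are non-reduced or have singularities worse than nodes, and controlling the singularities of the curves obtained by deformation is exactly the delicate content of Chen's degeneration. Your phrase ``a standard degeneration-to-a-nodal-configuration argument then lets one smooth such a configuration partially'' conceals this difficulty; as written it is not a proof, whereas citing Chen (as the paper does) would make the sketch complete but would also make the elliptic-$K3$ discussion redundant.
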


The first assertion is classical and standard in deformation theory 
(see \cite{Sev} and, more recently, e.g. \cite{chiantini-ciliberto,
  dedieu, Tan}).  
The second part is a consequence of the main theorem in \cite{chen}
(see also Mumford's theorem in \cite [pp. 365--367] {BHPV04}).

If $(X,L)$ is general, $V_{k,0}$ is reducible,  consisting of a finite
number of points (for the degree of $V_{k,0}$,  see
\cite {beauville-K3, yau--zaslow}). One might instead expect that 
if $(X,L)$ is general and $h\ge 1$, then $V_{k,h}$ is irreducible. 
This is trivially true for $h=p_a(k)$ and not difficult for 
$h=p_a(k)-1$ (the reader may easily figure out why), but is 
complicated as soon as $h$ gets lower. 
This conjecture, if true, certainly
will not be easy to prove. As a first approximation, 
one may propose a weaker irreducibility conjecture concerning
\emph{universal Severi varieties} (see \cite {dedieu}), which we now
recall.

\subsection{Universal Severi varieties}\label{ssec:USV}

For any $g\ge 2$,
$k\ge 1$ and $0\le h \le p_a(k)$, one can consider 
a stack $\mod{V}^g_{k,h}$ (see \cite [Proposition 4.8] {fkps}), called
the  \emph{universal Severi variety}, which is pure and smooth of
dimension $19+h$, 
endowed with a morphism $\phi^g_{k,h}: \mod{V}^g_{k,h}\to
\mod{B}_g\zero$, where $\mod B_g\zero$ is a suitable dense open
substack of $\mod B_g$. 
The morphism $\phi^g_{k,h}$ is smooth on all components of
$\mod{V}^g_{k,h}$, and its fibres are described in the following
diagram:
\[
\xymatrix@C=10pt{
\mod{V}^g_{k,h} \ar@{}[r]|(0.4){\supset} \ar[d]_{\phi^g_{k,h}}
& V_{k,h}(X,L) \ar[d] \\  
\mod{B}_g\zero \ar@{}[r]|(0.4){\ni} & (X,L)
}
\]
Thus a point of $\mod{V}^g_{k,h}$ can be regarded as a pair $(X,C)$
with $(X,L)\in \mod B_g$ and $C\in V_{k,h}(X,L)$.

One can conjecture that \emph{all universal Severi varieties
$\mod{V}^g_{k,h}$ are irreducible}. 
This does not imply the irreducibility of the \emph{pointwise
Severi varieties} $V_{k,h}(X,L)$, even if $(X,L)$ is general in
$\mod{B}_g$. 
The conjecture rather means that the monodromy of the
morphism $\phi^g_{k,h}$ transitively permutes the components
of the fibre $V_{k,h}(X,L)$, for $(X,L)\in\mod{B}_g$ general. 
This makes sense even if $h=0$, when the pointwise Severi variety
$V_{k,0}(X,L)$ is certainly reducible.

In addition to its intrinsic
interest, this conjecture is motivated by the results in
\cite{dedieu}, where it is shown that (a weak
version of) it implies the non--existence of rational
map $f:X \dashrightarrow X$ with $\deg (f)>1$ for a general
$K3$ surface $(X,L)$ of a given genus $g$. 
Very recently a proof of this result, based on quite delicate degeneration 
argument,  has
been proposed by Xi Chen \cite{chen2}.

\subsection {The moduli map}\label{ssec:moduli}

There is a natural \emph {moduli map} 
$\mu^g_{k,h}: \mod{V}^g_{k,h}\to \mod M_h$, where $\mod M_h$ is the
\emph{moduli stack} of  
curves  of genus $h$. 
The case $k=1$, $h=g$ has been much studied. It is related to 
the behaviour of the \emph{Wahl map} $w_C:\bigwedge^2 \H^0\left(C,
  \omega_C\right) \to \H^0\left(C, \omega_C^3\right)$ of a smooth
curve $C$ of genus $g$,  to extension properties of canonical curves
and to the classification of \emph{Fano varieties of the principal
  series} and of \emph{Mukai varieties}. We will not dwell 
recalling  all results on this subject, deferring the reader to the 
current literature (see, in chronological order, 
\cite  {mori-mukai, wahl,beauville-merindol, mukai1, cm90,
cukierman-ulmer,mukai2, clm93, clm98, beauville-fano}). Only recently
the nodal case $h<g$, $k=1$, received the deserved attention. We
recall the following theorem. 

\begin{theo}
\label{theo:fkps}
Assume $3\le g \le 11$ and $0\le h\le g$. For
any irreducible component $\mod{V}$ of $\mod{V}^g_{h}$,  the moduli map
${\mu^g_h}_{\vert \mod{V}}: \mod{V} \to \mod{M}_h $
is dominant, unless $g=h=10$.
\end{theo}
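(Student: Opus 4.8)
The plan is to prove the dominance of ${\mu^g_h}_{\vert\mod V}$ by reducing it to the case $h=g$, which is exactly the range where the Wahl map and extendability of canonical curves have been analysed in the literature cited after \S\ref{ssec:moduli}. First I would observe that, by Theorem \ref{theo:exp-descr} and the description of the universal Severi variety in \S\ref{ssec:USV}, a general point of a component $\mod V$ of $\mod V^g_h$ corresponds to a pair $(X,C)$ with $(X,L)$ having Picard rank one and $C\in |L|$ an irreducible nodal curve with $g-h$ nodes; its normalization $\widetilde C$ has genus $h$ and the moduli map sends $(X,C)\mapsto[\widetilde C]$. The strategy is to degenerate, inside $\mod V$, to a curve $C_0$ whose nodes lie at a prescribed general point of $X$, so that $\widetilde C_0$ is a smooth curve of genus $h$ sitting on the blow-up of $X$; then the tangent-space computation at this special point controls the differential of $\mu^g_h$.

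Concretely, the key steps would be: (1) Show that for $(X,L)$ general of genus $g$ and any $h$ in the range, the component $\mod V$ meets the locus where $C$ has all its $g-h$ nodes imposed at general points of $X$ in general position — this uses the existence statement of Theorem \ref{theo:exp-descr} together with a standard argument moving nodes along the Severi variety (the smoothness of $V_{h}(X,L)$ makes the node positions vary freely to first order). (2) For such $C_0$, identify $\widetilde C_0$ with a smooth curve in the linear system $|L - 2\sum_{i=1}^{g-h} E_i|$ on $\mathrm{Bl}_{p_1,\dots,p_{g-h}}X$, which is still a (possibly non-primitive, or lower genus) $K3$-type situation. (3) Compute the differential of $\mu^g_h$ at $(X,C_0)$: the cokernel of $d\mu^g_h$ is controlled by $H^1$ of the normal bundle of $\widetilde C_0$ twisted appropriately, or dually by a corank of a Wahl-type / Gaussian map for $\widetilde C_0$; invoke the known non-surjectivity/surjectivity results for the genus-$h$ case — precisely, the corank of the relevant Gaussian map is $19 - (19 + h - \dim\mod M_h)$-compatible except when $g=h=10$, where the general canonical curve of genus $10$ is Wahl-exceptional and lies on a $K3$ in a way that obstructs dominance. (4) Conclude that $d\mu^g_h$ is surjective at the general point of $\mod V$, hence $\mu^g_h|_{\mod V}$ is dominant.

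The main obstacle is step (3): relating the differential of the moduli map on the \emph{nodal} universal Severi variety to a concrete cohomological/Gaussian-map statement about the genus-$h$ normalization, and then quoting the correct result from the genus-$h$ literature with the right numerology. The bookkeeping of which twisted line bundle on the blown-up surface corresponds to $\widetilde C_0$, and the verification that the exceptional behaviour is confined to $g=h=10$ (where one must check that no component of $\mod V^{10}_{10}$ dominates $\mod M_{10}$, matching the classical fact that a general genus-$10$ curve has non-surjective Wahl map but is \emph{not} extendable to a K3 in the expected way — cf.\ \cite{cm90,clm93,cukierman-ulmer}), is where the real work lies. Everything else is a routine dimension count combined with the smoothness assertions already available from Theorem \ref{theo:exp-descr} and \S\ref{ssec:USV}.
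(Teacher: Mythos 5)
Your proposal does not follow the paper, and it is worth saying first that the paper itself offers no proof of Theorem \ref{theo:fkps}: the statement is quoted from the literature, the case $h=g$ being due to Mukai et al.\ \cite{mori-mukai,mukai1,mukai2,beauville-fano} and the case $h<g$ to \cite{fkps}. The only argument the paper adds is the remark following Theorem \ref{theo:hilb}, namely that its own Proposition \ref{prop:fibres} re-derives the part $h<g$, $g\neq 10$ from the part $h=g$: every component of $\hilb F_{g,h}$ has fibres of $p_{g,h}$ of dimension $23-g$, hence dominates $\hilb C_{g,h}$, which in turn dominates $\mod M_h$. That is a fibre-dimension count on the flag Hilbert scheme, using cones and graph curves; it is quite different from your plan of specializing nodes and controlling the differential of $\mu^g_h$ through a Gaussian map on the normalization.

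As a proof sketch, your plan has genuine gaps. The fatal one is step (1): on a primitive $K3$ with Picard group generated by $L$, the Severi variety $V_h(X,L)$ is smooth of dimension exactly $h$ (Theorem \ref{theo:exp-descr}), so the node map $V_h(X,L)\dashrightarrow \Sym^{g-h}(X)$ has image of dimension at most $h$, whereas placing the $\delta=g-h$ nodes at general points of $X$ requires $2\delta$ parameters. Whenever $2(g-h)>h$ (most of the range of the theorem; e.g.\ $h=0$, where the nodal rational curves are finitely many with rigid nodes) no such specialization exists inside $\mod V$, and smoothness of $V_h$ does not imply that node positions move freely to first order — that would be a statement about the surjectivity of the differential of the node map, which is false here. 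Step (3) is a placeholder rather than an argument: the identification of $\mathrm{coker}\,d\mu^g_h$ at a \emph{nodal} pair with the corank of a Gaussian-type map of the genus-$h$ normalization is exactly the hard missing content, and the numerology you indicate is not meaningful as written. Finally, the fact you invoke for the exception is wrong: the general genus-$10$ curve has \emph{surjective} Wahl map (Ciliberto--Harris--Miranda), which is precisely consistent with its not lying on a $K3$; the genus-$10$ anomaly relevant here is that curves of genus $10$ which do lie on $K3$ surfaces have non-generic Gaussian/normal-bundle cohomology, quoted in the paper as $h^0\left(N_{X/\P^{10}}(-1)\right)=14$ from \cite{cukierman-ulmer}, and this is how the remark after Theorem \ref{theo:hilb} explains why the paper's argument excludes $g=10$.
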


The case $h=g$ is in the series of papers
\cite{mori-mukai,mukai1,mukai2} (see also \cite{beauville-fano}).
The rest is in \cite {fkps}.

\begin{rem}
As stated in \cite{fkps}, the theorem applies only
for $h\ge 2$. The case $h=0$ is trivially true. 
The proof in \cite{fkps} applies to the case  $h=1$  if 
$3\le g \le 11$ and $g\neq 10$ as well.
The case $h=1, g=10$ is not covered
by the original argument 
(see also \cite [last lines of the proof of Theorem 5.5]{fkps}),
but can also be fixed. We do not dwell on this here. 
 \end{rem}

In the recent paper \cite{halic}, the moduli map $\mu^g_{k,h}$ has
been studied also for $g\ge 13$, any $k$ and $h$ sufficiently large
with respect to $g$, proving that, 
as one may expect, $\mu^g_{k,h}$  is generically finite to its image
in these cases. The remaining cases for $g,h,k$ are very interesting
and  still widely open.

\section{The main theorem}
\label{sec:irreducibility}

The aim of this paper is to prove the following result, 
which affirmatively answers the conjecture in \S\ref {ssec:USV} in
some cases.

\begin{theo}
\label{theo:main}
For $3\le g \le 11$, $g\neq 10$ and $0\le h
\le g$, the universal Severi variety $\mod{V}^g_{h}$
is irreducible.
\end{theo}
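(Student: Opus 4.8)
The plan is to reduce the irreducibility of $\mod{V}^g_h$ to a statement about a suitable flag Hilbert scheme, and then to prove that this flag Hilbert scheme is irreducible by a degeneration-to-cones argument. Concretely, one considers the component $\mathcal H$ of the Hilbert scheme of surfaces of degree $2g-2$ in $\P^g$ containing the (possibly singular) images $\varphi_{|L|}(X)$ of primitive $K3$'s of genus $g$; by Proposition \ref{prop:fibres}, for each admissible number of nodes $\delta=g-h$ the nodal hyperplane sections with $\delta$ nodes of these surfaces fill up the whole component of $\delta$-nodal degenerate canonical curves in the corresponding Hilbert scheme of curves in $\P^{g-1}$. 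This matching of fibres, together with the smoothness of $\phi^g_h$ on every component of $\mod{V}^g_h$ (recalled in \S\ref{ssec:USV}), lets one transfer the question of counting components of $\mod{V}^g_h$ to counting components of an appropriate flag Hilbert scheme $\mathcal{FH}$ parametrizing pairs (surface $S\subset\P^g$ in $\mathcal H$, a $\delta$-nodal hyperplane section of $S$), or more precisely the associated incidence locus. So the first step is to set up this Hilbert-schematic translation carefully, using the material of \S\ref{sec:setting}.

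The second step is the degeneration. Following Pinkham's technique (\S\ref{ssec:defo}), every component of $\mathcal{FH}$ specializes to a locus lying over certain \emph{cone points}: pairs consisting of a cone over a canonical curve $C\subset\P^{g-1}$ (together with a hyperplane section which is again such a cone, or a nodal degeneration thereof built from a $\delta$-nodal $C$). The key point is then to establish \emph{smoothness} of $\mathcal{FH}$ at these cone points. This is where the graph-curve computation of \S\ref{ssec:graph} enters: one degenerates the canonical curve $C$ further to a graph curve (a stick curve dual to a trivalent graph) and computes the relevant $\H^0$ and $\H^1$ of normal bundles — of $C$ in $\P^{g-1}$, of the cone in $\P^g$, and of the flag — by a combinatorial/cohomological argument on the graph. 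Semicontinuity then upgrades the vanishing from the graph curve to the general canonical curve, giving that $\mathcal{FH}$ is smooth, hence irreducible locally, at the cone points. Since all components pass through cone points and the cone points form an irreducible family (determined by the genus $g$ alone, modulo the choice of $C$ which varies irreducibly), one concludes that $\mathcal{FH}$ — and therefore $\mod{V}^g_h$ — is irreducible. The exclusion $g\neq 10$ (and the restriction $g\le 11$) is dictated exactly by the range in which Theorem \ref{theo:fkps} / Proposition \ref{prop:fibres} holds, i.e.\ the range in which the $K3$ surfaces do fill up the expected Hilbert-scheme component.

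The main obstacle, as I see it, is the normal-bundle cohomology vanishing at the cone points via graph curves. One must show that the obstruction space to deforming the flag (cone over $C$, nodal section) vanishes; this splits, roughly, into an $\H^1$ of the normal bundle of the cone in $\P^g$ — controlled by $\H^1$ of the normal bundle of $C$ in $\P^{g-1}$ twisted suitably, plus a piece coming from the cone vertex — and a contribution accounting for the $\delta$ node-imposed conditions being independent. Verifying the graph-curve vanishing requires choosing the graph well (so that the associated stick curve is a smooth point of the Hilbert scheme with the right $h^1$) and checking that imposing nodes on a graph curve still behaves as expected; the interplay of the nodes with the reducibility of the graph curve is the delicate bookkeeping. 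The remaining steps — the Hilbert-schematic reformulation and the Pinkham specialization — are, by contrast, fairly formal once the dictionary of \S\ref{sec:setting} is in place, so the proof "turns out to be quite simple" modulo this single cohomological input.
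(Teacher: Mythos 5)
Your overall strategy (Hilbert-schematic reformulation, Pinkham degeneration to cones, graph curves plus semicontinuity) is the paper's, but your pivotal step is not, and that is exactly where the gap lies. You propose to prove that the flag Hilbert scheme itself is smooth at the cone points, by showing that the obstruction space --- an $\H^1$ of normal bundles of $C$, of the cone, and of the flag --- vanishes, the vanishing being checked on graph curves and propagated by semicontinuity. No such statement is proved in the paper, and none of the tools you invoke gives it: Proposition \ref{prop:normal-graph} only records $h^0$ of \emph{negative twists} of $N_{\Gamma_g/\P^{g-1}}$ (no $\H^1$ at all), Lemma \ref{lem:grading} only computes $\H^0\bigl(N_{X_C/\P^g}(-i)\bigr)$, and $\hilb{F}_{g,h}$ is by definition the preimage of the closure $\hilb{C}_{g,h}$ of the nodal locus, so its Zariski tangent space at a point such as $(X_C,C)$, with $X_C$ a cone over a nodal curve, is not controlled by normal-bundle cohomology in any straightforward functorial way. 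You yourself flag this obstruction-vanishing as ``the main obstacle'' and leave it unverified; since your contradiction mechanism (irreducible cone locus plus smoothness of the ambient scheme along it) stands or falls with it, the key step of your proof is missing, and it is doubtful it can be carried out as stated. (Incidentally, the cone point is the pair $(X_C,C)$ with $C$ itself the hyperplane section of the cone, not ``a hyperplane section which is again a cone''.)

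The paper needs much less and works fibrewise. Because every component of $\hilb{F}_{g,h}$ dominates $\hilb{C}_{g,h}$ (Proposition \ref{prop:fibres}, whose proof uses the case $h=g$ of Theorem \ref{theo:fkps} --- this is where $3\le g\le 11$, $g\neq 10$ enters), one may fix a single general curve $C\in\hilb{C}_{g,h}$ and find, over this same $C$, general points of any two components of $\hilb{F}_{g,h}$ lying on distinct components of the one fibre $F_C$ of $p_{g,h}$. Pinkham's degeneration (Lemma \ref{lem:cone}) takes place inside $F_C$, so both fibre components contain the single cone point $(X_C,C)$. The only smoothness needed is that of the fibre $F_C$ at $(X_C,C)$, and this is a pure $h^0$ statement: the tangent space to $F_C$ there is $\H^0\bigl(X_C,N_{X_C/\P^g}(-1)\bigr)\cong\bigoplus_{k\ge1}\H^0\bigl(C,N_{C/\P^{g-1}}(-k)\bigr)$ by Lemma \ref{lem:grading}, bounded above by $23-g$ via semicontinuity from the graph curve, and $23-g$ is precisely the dimension of every component of $F_C$ by Proposition \ref{prop:fibres}. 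No obstruction space, no $\H^1$, and no smoothness of $\hilb{F}_{g,h}$ (or of $\hilb{F}_g$) at cone points is ever used. To repair your write-up, replace the global smoothness claim by this fibrewise tangent-space comparison over a fixed general $C$; as it stands, the central claim of your proposal is unproved and not implied by the results you cite.
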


By adopting a Hilbert schematic viewpoint and inspired by 
\cite{cm90}, we find a flag Hilbert scheme $\mod F_{g,h}$,
with a rational map $\mod F_{g,h}\dasharrow \mod{V}^g_{h}$
dominating  all components of $\mod{V}^g_h$, and we
prove that $\mod F_{g,h}$ is irreducible
(see Theorem \ref{theo:hilb}). To show this, we exhibit smooth points
of $\mod F_{g,h}$ which are contained in all irreducible components of 
$\mod F_{g,h}$ (see \S \ref {ssec:proof}).

\subsection{The Hilbert schematic viewpoint}
\label{sec:setting}

For any $g\ge 3$, we let $\hilb{B}_g$ be the component of the
Hilbert scheme of surfaces in $\P^g$ whose general point parametrizes
a primitive $K3$ 
surface of genus $g$. An open subset of $\hilb{B}_g$ is a $\PGL(g+1,
\C)$-bundle over the open subset of $\mod{B}_g$ corresponding to pairs
$(X,L)$ with very ample $L$.
The variety $\hilb{B}_g$ is therefore irreducible of  dimension
$g^2+2g+19$.

Let $\hilb{C}_g$ be the component of the Hilbert scheme of curves in
$\P^g$ whose general point parametrizes a \emph{degenerate} canonical
curve of genus $g$, 
i.e. a smooth canonical curve of genus $g$  lying in a hyperplane
of $\P^ g$. 
An open subset of $\hilb{C}_g$ is a
$|\O_{\P^g}(1)|\times\PGL(g,\C)$--bundle over the open subset of
$\mod{M}_g$  parametrizing non--hyperelliptic curves, so 
$\hilb{C}_g$ is irreducible of dimension 
$g^2+4g-4$.

Let $\hilb{F}_g$ be the component of the \emph{flag Hilbert scheme} of
$\P^g$ (see \cite{kleppe, Sernesi-book})
whose general point is a pair $(X,C)$ with $X \in \hilb{B}_g$ general
and $C\in \hilb{C}_g$ a general hyperplane section of $X$. An open
subset of $\hilb{F}_g$ is a $\P^g$--bundle over an open subset of
$\hilb{B}_g$. As such it is irreducible of dimension 
$g^2+3g+19$.

Let $0 \le h \le g$. We denote by $\hilb{C}_{g,h}$ the
Zariski closure of the locally closed, functorially defined, 
subset of $\hilb{C}_g$ formed by
irreducible, nodal, genus $h$ curves. It comes with a moduli map 
$
c_{g,h}: \hilb{C}_{g,h} \dasharrow \mod{M}_h$, which is dominant.
Up to projective transformations, the fibre
over a curve $C \in \mod{M}_h$ is a dense open subset of 
$\Sym^{\delta}\left(\Sym^2 (C)\right)$, with $\delta = g-h$, and is
therefore irreducible. So $\hilb{C}_{g,h}$ is irreducible,
of dimension  $g^2+4g-4-\delta$.

We let $\hilb{F}_{g,h}$ be the inverse image of $\hilb{C}_{g,h}$ under
the projection $\hilb{F}_g \to \hilb{C}_g$. 
We have a natural dominant map $m_{g,h}: \hilb{F}_{g,h}\dasharrow \mod
V^g_h$. Any irreducible component $\hilb F$ of $\hilb{F}_{g,h}$
dominates $\hilb{B}_g$ via the restriction of the projection
$\hilb{F}_g \to \hilb{B}_g$ (see \S\S \ref {ssec:severi} and \ref
{ssec:USV}), and 
has dimension 
\[\dim\left( \hilb{F}\right) = \dim\left( \hilb{F}_{g,h}\right) =
g^2+3g+19-\delta, \] 
where again $\delta=g-h$. We let 
$p_{g,h}: \hilb{F}_{g,h} \to \hilb{C}_{g,h}$
be the natural projection and we use the shorter notation $p_g$ for
$p_{g,g}$.

Because of the existence of the dominant map $m_{g,h}$, the following
implies Theorem \ref{theo:main}.

\begin{theo}
\label{theo:hilb}
Let  $3\le g\le
11$, $g\neq 10$, and $0\le h\le g$. Then 
$\hilb{F}_{g,h}$ is irreducible.
\end{theo}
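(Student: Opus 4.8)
The plan is to prove irreducibility of $\hilb{F}_{g,h}$ by exhibiting a locus of \emph{cone points} through which every irreducible component must pass, and at which $\hilb{F}_{g,h}$ is smooth; since a scheme that is smooth at a point lies on a unique component there, this forces $\hilb{F}_{g,h}$ to be irreducible. Concretely, I would first reduce to the case $h=g$, i.e.\ to $\hilb{F}_g$: for general $h<g$ a nodal hyperplane section of a $K3$ degenerates, inside the fixed hyperplane $\P^{g-1}$, to a cone over a lower-dimensional configuration, and one checks via Proposition~\ref{prop:fibres} (or \cite{fkps}) that all components of $\hilb{C}_{g,h}$ — hence of $\hilb{F}_{g,h}$, which is a $\P^g$-bundle-like inverse image — specialize compatibly. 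So the crux is the statement for $\hilb{F}_g$, the flag Hilbert scheme of pairs (degenerate canonical curve $C$, $K3$ surface $X$ containing it as a hyperplane section).

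Next I would set up Pinkham's degeneration to the cone (recalled in \S\ref{ssec:defo}). Take a general $(X,C)\in\hilb{F}_g$; the curve $C\subset\P^{g-1}$ is canonical, and $X\subset\P^g$ is a $2g-2$-degree $K3$ surface with $C$ as hyperplane section. Degenerate $C$ to a point of the Hilbert scheme $\hilb{C}_g$ lying over a \emph{cone} — e.g.\ take $C$ to a curve sitting on a cone, or degenerate $X$ to a cone over $C$, so that the flag $(X,C)$ acquires a distinguished cone structure. The point is that the locus of such cone points is irreducible (it is essentially parametrized by the base curve plus projective data), and that \emph{every} component of $\hilb{F}_g$ contains cone points: this is where the dimension count in \S\ref{sec:setting} enters, forcing any component to have the expected dimension $g^2+3g+19$ and to meet the cone locus (mimicking \cite{cm90, clm93}).

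Then I would compute the obstruction space at a cone point and show it vanishes, so that $\hilb{F}_g$ — equivalently the relevant flag Hilbert scheme — is smooth there. Here the graph-curve technique of \S\ref{ssec:graph} takes over: one degenerates the canonical curve $C$ further to a graph curve (a stick curve dual to a trivalent graph), so that the normal bundle cohomology $\H^1(C, N_{C/\P^{g-1}})$ and the mixed cohomology groups governing deformations of the flag $(X,C)$ become combinatorial sums over the graph, which one can make vanish precisely in the range $3\le g\le 11$, $g\neq 10$. Semicontinuity then propagates the vanishing to the general cone point, giving smoothness of $\hilb{F}_{g}$ there. The $g=10$ exclusion is exactly the known failure of surjectivity of the relevant Gaussian/Wahl-type map for genus $10$ (the same anomaly that appears in Theorem~\ref{theo:fkps} and in \cite{cm90, clm93, mukai2}), and $g\le 11$ is the bound up to which the $K3$ sections fill the nodal locus in $\hilb{C}_{g,h}$ via Proposition~\ref{prop:fibres}.

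The main obstacle I expect is the obstruction-space vanishing at the cone points: one must carefully identify which $\Ext^1$ or $\H^1$ group controls the flag Hilbert scheme $\hilb{F}_{g,h}$ (it mixes $N_{C/X}$, $N_{X/\P^g}$ and $N_{C/\P^g}$ through the flag complex), and then reduce its vanishing to a graph-combinatorial statement that genuinely holds for $3\le g\le 11$, $g\neq 10$. A secondary subtlety is checking that \emph{all} components of $\hilb{F}_{g,h}$ — not just the distinguished one whose general point is a smooth $K3$ section — actually contain cone points; this requires the dimension bound $\dim(\hilb F)=g^2+3g+19-\delta$ established in \S\ref{sec:setting}, together with a specialization argument showing the cone locus is a limit of every component. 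Once these are in place, the argument is as short as promised: smoothness at a point lying on every component forces irreducibility.
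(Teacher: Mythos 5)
Your toolbox is the right one (Pinkham's degeneration to cones, graph curves, and the principle that a point which is smooth and lies on every component forces irreducibility), but two structural steps of your plan do not work as stated, and they are exactly where the paper's argument differs. First, the proposed ``reduction to $h=g$'' is empty and cannot carry the weight you put on it: $\hilb{F}_g$ is irreducible by its very definition (it is \emph{a component} of the flag Hilbert scheme), so the whole content of Theorem \ref{theo:hilb} is the case $h<g$. Moreover $\hilb{F}_{g,h}\to\hilb{C}_{g,h}$ is not a $\P^g$-bundle-like map: its fibre over a nodal curve $C$ is the locus $F_C$ of $K3$ surfaces in $\hilb{B}_g$ containing $C$ as a hyperplane section, of dimension $23-g$, and controlling the components of these fibres (equivalently, of the preimage $\hilb{F}_{g,h}$ of the irreducible $\hilb{C}_{g,h}$) \emph{is} the theorem; it does not follow from any statement about $\hilb{F}_g$ by ``specializing compatibly''. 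Second, you place the smoothness statement at the level of the flag Hilbert scheme itself, which forces you into the obstruction/$\H^1$ computation for the flag complex that you yourself flag as the unresolved ``main obstacle''. The paper never does this computation.

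The actual argument is fibre-wise over a \emph{fixed general nodal curve} $C\in\hilb{C}_{g,h}$, and only tangent spaces ($\H^0$'s) are needed. Proposition \ref{prop:fibres} shows that \emph{every} component of $\hilb{F}_{g,h}$ dominates $\hilb{C}_{g,h}$ (this, not a dimension count forcing components to ``meet the cone locus'', is how all components are reached), so two distinct components $\hilb{F}_1,\hilb{F}_2$ would give points $(X_i,C)$ on two distinct components $F_1, F_2$ of the fibre $F_C$ of $p_{g,h}$. Lemma \ref{lem:cone} degenerates each $(X_i,C)$, \emph{inside $F_C$}, to the cone $X_C$ over the same nodal curve $C$ (not over a smooth canonical curve, and not by moving $C$), so the single cone point $(X_C,C)$ lies on both $F_1$ and $F_2$. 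The contradiction then comes from a pure tangent-space bound at the cone point: by Lemma \ref{lem:grading} the tangent space to $F_C$ there is $\bigoplus_{k\ge 1}\H^0\bigl(C,N_{C/\P^{g-1}}(-k)\bigr)$, and since $\hilb{C}_{g,h}$ contains the graph curves of Proposition \ref{prop:normal-graph}, semicontinuity and \eqref{tg-fibre} bound its dimension by $23-g$, which equals $\dim F_1=\dim F_2$; hence $(X_C,C)$ is a smooth point of $F_C$ and cannot lie on two components. No vanishing of obstruction spaces, and no identification of the deformation theory of the flag, is ever required; this is the missing idea that makes your ``main obstacle'' disappear, and without it (and with the faulty reduction to $h=g$) your proposal does not yet constitute a proof.
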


For the proof, we need to recall a few facts, collected in the next
two subsections.

\subsection{Degenerations to cones}\label{ssec:defo}

The following lemma relies on a well known 
construction of Pinkham
\cite[(7.7)]{pinkham}, 
and is based on the fact that 
smooth $K3$ surfaces are \emph{projectively
Cohen--Macaulay}, see \cite{mayer,saint-donat}.

\begin{lem}
\label{lem:cone}
Let
$(X,C)\in \hilb{F}_{g,h}$ with $X$ a smooth $K3$ surface. 
Let $X_C$ be the cone over $C$ 
from a point $v$ in $\P^g$ off the
hyperplane in which $C$ sits.
Then one can flatly degenerate $(X,C)$
to $(X_C,C)$ inside 
the fibre $F_C$ of $p_{g,h}$ over $C$. 
\end{lem}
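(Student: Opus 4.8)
\textbf{Plan for the proof of Lemma \ref{lem:cone}.}
The plan is to realise the degeneration of $(X,C)$ to $(X_C,C)$ as a sweep-out construction in the style of Pinkham's \cite[(7.7)]{pinkham}. Pick homogeneous coordinates $x_0,\dots,x_g$ on $\P^g$ so that the hyperplane $\H$ containing $C$ is $\{x_0=0\}$, and so that $v=[1:0:\cdots:0]$. For $t\in\C^*$ consider the automorphism $\gamma_t\in\PGL(g+1,\C)$ acting by $x_0\mapsto t\,x_0$ and fixing $x_1,\dots,x_g$; geometrically $\gamma_t$ fixes $v$ and fixes $\H$ pointwise, and as $t\to 0$ it "pushes" everything towards the vertex $v$. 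Set $X_t:=\gamma_t(X)$. Since $\gamma_t$ fixes $\H$ pointwise it fixes $C=X\cap\H$, so the pair $(X_t,C)$ lies in $\hilb{F}_{g,h}$ for every $t\in\C^*$, and in fact in the fibre $F_C$ of $p_{g,h}$ over $C$ since the curve component is constantly $C$. Thus we obtain a morphism $\C^*\to F_C$, $t\mapsto (X_t,C)$, which I want to extend over $t=0$ with flat limit $(X_C,C)$.

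The key step is to identify the flat limit $\lim_{t\to 0} X_t$ with the cone $X_C$. For this I would work on the level of ideals: write $I_X\subset S:=\C[x_0,\dots,x_g]$ for the saturated homogeneous ideal of $X$, and decompose each generator $F\in I_X$ according to its degree in $x_0$, i.e. $F=\sum_{j\ge 0} x_0^{\,j} F_j(x_1,\dots,x_g)$. Then $\gamma_t^*F=\sum_j t^{\,j} x_0^{\,j}F_j$, and dividing by the lowest power of $t$ occurring (the $x_0$-adic "initial term" construction) and letting $t\to 0$ produces, in the limit, the ideal generated by the terms of lowest $x_0$-degree — which by construction is an ideal generated by forms of the shape $x_0^{\,j_F}G_F(x_1,\dots,x_g)$. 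The saturation of this limit ideal is the ideal generated by the $G_F$'s alone, i.e. the ideal of the cone over the subscheme $\{G_F=0\}\subset\H$. The point is that for $F$ a generator of $I_X$ the lowest-$x_0$-degree part $x_0^{\,j_F}G_F$ has $j_F=0$ and $G_F\in I_{C/\H}$ restricted to $\H$; more precisely one checks that the $x_0$-degree-$0$ parts of a generating set of $I_X$ generate, after restriction, the homogeneous ideal of $C$ inside $\H=\P^{g-1}$. This is exactly where the hypothesis that $X$ is \emph{projectively Cohen--Macaulay} (equivalently, projectively normal with the right Betti numbers, see \cite{mayer,saint-donat}) enters: it guarantees that the Hilbert function is constant along the family — equivalently that no drop in the Hilbert polynomial occurs and the limit is flat with the expected Hilbert polynomial — and it ensures that restriction to the hyperplane $\H$ is surjective on the graded pieces of the homogeneous coordinate ring, so that a minimal generating set of $I_X$ restricts to a generating set of $I_{C}$ in $S/(x_0)$. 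Granting this, the flat limit is a Cohen--Macaulay scheme with the same Hilbert polynomial as $X$, containing $C$, and cut out (up to saturation) by the cone over the equations of $C$; hence it is $X_C$.

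Concretely I would phrase this as: the closure in $\hilb{F}_{g,h}$ (or rather in the relevant flag Hilbert scheme, taking the closure of $\{(X_t,C):t\in\C^*\}$) is a curve mapping to $F_C$, its special fibre over $t=0$ is a subscheme of $\P^g$ flat over $\Spec\C[t]$ by the constancy of the Hilbert polynomial, and the identification of this special fibre with $X_C$ follows from the ideal-theoretic computation above. Since $C\subset X_t$ for all $t$, the same holds in the limit, so the limit pair $(X_C,C)$ genuinely lies in $F_C$; this establishes the lemma.

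\textbf{Main obstacle.} The real work is the flatness of the limit together with the identification of the special fibre: one must rule out the appearance of embedded or lower-dimensional components at the vertex and confirm that no generators of $I_X$ "disappear" in the limit, both of which are controlled precisely by the projective Cohen--Macaulayness of $X$ (and the projective normality of the canonical curve $C$, which holds for $g\ge 3$ in the non-hyperelliptic range by \cite{saint-donat}). Packaging this cleanly — ideally by citing Pinkham's construction \cite[(7.7)]{pinkham} and checking that its hypotheses are met in our situation rather than redoing the deformation-theoretic estimates — is the step that needs the most care; everything else is the bookkeeping of coordinates and the elementary description of the one-parameter subgroup $\gamma_t$.
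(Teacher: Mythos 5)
Your proposal is essentially the paper's own proof: the same choice of coordinates, the same one-parameter projective transformation fixing the hyperplane $H$ pointwise and fixing the vertex $v$, the observation that $C$ stays fixed so the whole family lies in $F_C$, and the same appeal to projective Cohen--Macaulayness (via Pinkham's construction) to identify the flat limit with the cone $X_C$; your ideal-theoretic discussion merely spells out what the paper delegates to the citation. One small normalisation to fix for internal consistency: with $X_t:=\gamma_t(X)$ and $\gamma_t\colon x_0\mapsto t\,x_0$, the points of $X$ are pushed towards $H$ (the substitution $F\mapsto F(t x_0,x_1,\dots ,x_g)$ you actually use cuts out $\gamma_t^{-1}(X)$, not $\gamma_t(X)$), so one should take $\gamma_t^{-1}$, equivalently let $t\to\infty$ -- a harmless convention issue whose resolution does not change the argument.
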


\begin{proof} Let $H$ be the hyperplane containing $C$.
Choose homogeneous coordinates $(x_0\!:\ldots:\!x_g)$ such that
$v=(1\!\!:\!0\!:\ldots:\!0)$ and $H$ is given by $x_0=0$.
Consider the projective transformation $\omega_t$, $t\neq 0$,
such that $\omega_t(x_0\!:\ldots:\!x_g)=(tx_0\!:x_1:\ldots:\!x_g)$.
Set $X_t=\omega_t(X)$.
Then $(X_t,C)\in F_C$ for all $t\neq 0$. Since $X$ is projectively
Cohen--Macaulay, $X_C$, with its reduced structure, is the flat limit
of $X_t$ when $t$ tends to $0$. \end{proof}\medskip

The fibre $F_C$ of $p_{g,h}$  equals the fibre of $p_g$,
 whose tangent space at the point $(X,C)$ is isomorphic to
$\H^0(X,N_{X/ \P^g}(-1))$ 
(see e.g. \cite[\S 4.5.2]{Sernesi-book}).
The next lemma computes this space at a \emph{cone point}
$(X_C,C)$ (the proof is the same as in
\cite[Theorem 5.1]{pinkham},
and relies on the fact that $C$ is projectively Cohen--Macaulay,
see \cite{rosenlicht, kleiman-martins, schreyer}).

\begin{lem}
\label{lem:grading}
Let $C$ be a reduced and irreducible, not necessarily smooth,
degenerate canonical curve in $\P^g$, of arithmetic genus $g$. 
Let $X_C$ be the cone over $C$ from a point in $\P^g$ off
the hyperplane in which $C$ sits. For all $i\ge 0$,  one has
\begin{equation}
\label{grading}
\H^0\!\left(X_C,N_{X_C/ \P^g}\right(-i)) \cong 
\bigoplus_{k\ge i} \,\H^0\!\left(C,N_{C/\P^{g-1}}(-k)\right).
\end{equation}
\end{lem}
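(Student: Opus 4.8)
\textbf{Proof proposal for Lemma \ref{lem:grading}.}

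The plan is to exploit the grading on the cone. Let $S = \C[x_0,\dots,x_g]$ with the cone vertex $v = (1\!:\!0\!:\cdots\!:\!0)$, and write $R = \C[x_1,\dots,x_g]$ for the coordinate ring of the hyperplane $H = \{x_0 = 0\}$ in which $C$ sits, so that $\P^{g-1} = \P(H)$. If $I_C \subset R$ is the (saturated) homogeneous ideal of $C$ in $\P^{g-1}$, then the homogeneous ideal of the cone $X_C$ in $\P^g$ is $I_{X_C} = I_C \cdot S$, the extension of $I_C$ along the inclusion $R \hookrightarrow S = R[x_0]$. The key input, as in \cite[Theorem 5.1]{pinkham}, is that $C$ is projectively Cohen--Macaulay (see \cite{rosenlicht, kleiman-martins, schreyer}), which guarantees that taking the cone is flat, that $X_C$ is again projectively Cohen--Macaulay, and that the normal module behaves well under the extension of scalars $R \to R[x_0]$.

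The main step is to identify $\H^0(X_C, N_{X_C/\P^g}(j))$ for every $j \in \Z$ with a graded piece of the normal module $N_{X_C} := \Hom_{S}(I_{X_C}/I_{X_C}^2, S/I_{X_C})$, and similarly $\H^0(C, N_{C/\P^{g-1}}(j))$ with a graded piece of $N_C := \Hom_R(I_C/I_C^2, R/I_C)$; this uses projective Cohen--Macaulayness to suppress the local cohomology correction terms relating the normal sheaf's sections to the normal module (cf. \cite[\S 4.5]{Sernesi-book}). Then, since $I_{X_C} = I_C \otimes_R S$ and $S$ is free over $R$ (namely $S = \bigoplus_{k \ge 0} R \cdot x_0^k$ as graded $R$-modules, with $x_0^k$ in degree $k$), one gets
\[
N_{X_C} \cong N_C \otimes_R S \cong \bigoplus_{k \ge 0} N_C(-k)
\]
as graded $S$-modules, hence as graded $R$-modules. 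Taking the degree-$(-i)$ part of this identity, and re-indexing, yields
\[
\H^0\!\left(X_C, N_{X_C/\P^g}(-i)\right) \cong \bigoplus_{k \ge 0} \H^0\!\left(C, N_{C/\P^{g-1}}(-i-k)\right) = \bigoplus_{k \ge i} \H^0\!\left(C, N_{C/\P^{g-1}}(-k)\right),
\]
which is exactly \eqref{grading}.

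The delicate point, and the place where the Cohen--Macaulay hypothesis on $C$ is genuinely needed rather than merely convenient, is the comparison between sections of the normal sheaf and graded pieces of the normal module: in general $\H^0(\P^g, N_{X_C/\P^g}(j))$ and $(N_{X_C})_j$ differ, and one must know that the relevant local cohomology modules $\H^0_{\mathfrak m}$ and $\H^1_{\mathfrak m}$ of the structure ring and of $I_{X_C}$ vanish in the appropriate degrees so that the two agree for all $j$ simultaneously. Projective Cohen--Macaulayness of $C$ propagates to $X_C$ (coning preserves depth up to the expected shift) and provides precisely these vanishings; I expect this verification, together with checking that the isomorphism $N_{X_C} \cong N_C \otimes_R S$ respects the grading conventions so that $x_0^k$ contributes the shift by $-k$, to be the bulk of the argument, everything else being a direct transcription of Pinkham's computation. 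Since the statement explicitly says the proof is the same as in \cite[Theorem 5.1]{pinkham}, it suffices to note that Pinkham's argument uses the smoothness of $C$ only through its projective Cohen--Macaulayness, which we have assumed, so the extension to the nodal (reduced, irreducible) case is immediate.
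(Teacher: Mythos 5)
Your proposal is correct and follows essentially the same route as the paper, which simply refers to Pinkham's Theorem 5.1: the identification of twisted sections of the normal sheaf with graded pieces of the normal module via projective Cohen--Macaulayness, followed by the base change $N_{X_C}\cong N_C\otimes_R R[x_0]$ and bookkeeping of the grading, is exactly Pinkham's computation, and you correctly observe that it only uses the (assumed) arithmetic Cohen--Macaulayness of $C$, not smoothness.
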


Next we need to bound from above the dimensions of the cohomology spaces
appearing in the right--hand--side of \eqref{grading}. We use
semi--continuity, and a special type of canonical curves for which
they  can be  computed.

\subsection{Canonical graph curves}\label{ssec:graph}

A \emph{graph curve} of genus $g$ is a stable curve 
of genus $g$ consisting of $2g-2$ irreducible components of genus 0
(see \cite{bayer-eisenbud, cm90}). A graph curve has $3g-3$ nodes
(three nodes for each component), and it is determined by the dual
\emph{trivalent} graph, consisting of $2g-2$ nodes and $3g-3$
edges. If $C$ is a graph curve and its dualizing sheaf $\omega_C$
is very ample, then $C$ can be canonically embedded in $\P^{g-1}$
as a union of $2g-2$ lines, each meeting three others
at distinct points. This is a \emph{canonical graph curve}. 

\begin{prop}
\label{prop:normal-graph}
{\normalfont \cite{cm90}}
For $3\le g\le 11$, $g\neq 10$, there exists a genus $g$
canonical graph curve $\Gamma_g$ in $\P^{g-1}$, sitting in the image of 
$p_g$, such that the
dimensions of the spaces of sections of negative twists of
the normal bundle are given in the following table: 
\[ 
\begin{array}{c|*{8}{l}}
\h^0\left(N_{\Gamma_g/ \P^{g-1}}(-k)\right) \,\setminus\, g & 
3 & 4 & 5 & 6 & 7 & 8 & 9 & 11 \\
\cline{1-9} 
k=1 & 10 & 13 & 15 & 16 & 16 & 15 & 14 & 12 \\
k=2 & 6  & 5  & 3  & 1  & 0  & 0  & 0  & 0  \\
k=3 & 3  & 1  & 0  & 0  & 0  & 0  & 0  & 0  \\
k=4 & 1  & 0  & 0  & 0  & 0  & 0  & 0  & 0  \\
k \ge 5 & \multicolumn{8}{l}{0 \text{ for every } g \hfill} 
\end{array} 
\]
hence
\begin{equation}
\label{tg-fibre}
\sum\nolimits_{k \ge 1}
\,\h^0\!\left(\Gamma_g,N_{\Gamma_g/\P^{g-1}}(-k)\right) 
= 23-g.
\end{equation}
\end{prop}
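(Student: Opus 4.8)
\emph{Strategy.} For each $g\in\{3,4,5,6,7,8,9,11\}$ the plan is to exhibit an explicit trivalent graph $G_g$ on $2g-2$ vertices, to realise the associated graph curve $\Gamma_g$ canonically as a union of $2g-2$ lines in $\P^{g-1}$, to check that it sits in the image of $p_g$, and to compute the cohomology of the negative twists of $N_{\Gamma_g/\P^{g-1}}$ line by line, the combinatorics being governed by $G_g$. The identity \eqref{tg-fibre} then follows by summing a column of the resulting table.

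\emph{Step 1: the canonical graph curves.} Choose $G_g$ to be $3$-edge-connected (for small $g$ a prism works: $K_4$ for $g=3$, the triangular prism for $g=4$, the $3$-cube for $g=5$; for $8\le g\le 11$ one uses the graphs of \cite{cm90}). By the Bayer--Eisenbud criterion \cite{bayer-eisenbud} the dualizing sheaf $\omega_{\Gamma_g}$ is then very ample, so $\varphi_{|\omega_{\Gamma_g}|}$ embeds $\Gamma_g$ in $\P^{g-1}$ as a configuration of $2g-2$ lines, each meeting exactly three of the others at three distinct points. For $g=3,4,5$ one arranges in addition that $\Gamma_g$ is a general complete intersection canonical curve --- respectively a plane quartic, a $(2,3)$ and a $(2,2,2)$ complete intersection --- by staying away from the trigonal configurations; this makes the later cohomology computation transparent.

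\emph{Step 2: membership in the image of $p_g$.} Following \cite{cm90}, one exhibits a degenerate $K3$ surface $X_g\subset\P^g$ lying in $\hilb B_g$ --- a suitable union of planes dual to the (planar) polyhedral graph $G_g$ --- of which $\Gamma_g$ is a hyperplane section; then $(X_g,\Gamma_g)\in\hilb F_g$, so $\Gamma_g$ lies in the image of $p_g$. An alternative, once Step 3 gives $\h^0(N_{\Gamma_g/\P^{g-1}})=g^2+4g-4$ and $\h^1(N_{\Gamma_g/\P^{g-1}})=0$: the Hilbert scheme is smooth of expected dimension at $[\Gamma_g]$, a general deformation is a smooth non-degenerate --- hence canonical --- curve, so $[\Gamma_g]\in\hilb C_g$, and $\Gamma_g$ lies in the image of $p_g$ since $p_g$ is proper and dominates the irreducible $\hilb C_g$, the general hyperplane section of a general $K3$ of genus $g$ being a general canonical curve (Mukai's theorem, valid for $g\le 9$ and $g=11$ --- which is precisely why $g=10$ must be excluded).

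\emph{Step 3: the normal bundle, and conclusion.} This computation is the technical heart, and the step I expect to be the main obstacle. Write $\Gamma_g=\bigcup_i L_i$; on each component $N_{L_i/\P^{g-1}}\cong\mathcal O_{\P^1}(1)^{\oplus(g-2)}$, while $N_{\Gamma_g/\P^{g-1}}|_{L_i}$ differs from this only at the three nodes of $\Gamma_g$ lying on $L_i$, so the global sections of $N_{\Gamma_g/\P^{g-1}}(-k)$ are obtained by gluing sections on the $L_i$ across these nodes. Encoding the gluing as linear algebra indexed by the vertices and edges of $G_g$ --- through the exact sequences $0\to N'_{\Gamma_g}\to N_{\Gamma_g/\P^{g-1}}\to T^1_{\Gamma_g}\to 0$ and the Mayer--Vietoris sequence of $\Gamma_g=\bigcup_i L_i$, as in \cite{cm90} --- one computes $\h^0(N_{\Gamma_g/\P^{g-1}}(-k))$ for all $k\ge 0$, obtaining the table, and in particular $\h^1(N_{\Gamma_g/\P^{g-1}})=0$. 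For $g=3,4,5$ this is just Riemann--Roch for the powers $\omega_{\Gamma_g}^{\,4-k}$, resp. $\omega_{\Gamma_g}^{\,2-k}\oplus\omega_{\Gamma_g}^{\,3-k}$, resp. $\left(\omega_{\Gamma_g}^{\,2-k}\right)^{\oplus 3}$ furnished by the Koszul resolution of the complete intersection ideal, using that $\omega_{\Gamma_g}$ restricts to $\mathcal O_{\P^1}(1)$ on every line; for $6\le g\le 11$ it is a genuine finite computation on the line configuration, and the real point is that $G_g$ must be chosen general enough that no spurious sections of the higher twists survive --- this is why the table stabilises to $\h^0(N_{\Gamma_g/\P^{g-1}}(-k))=0$ for $k\ge 2$ as soon as $g\ge 7$. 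Finally, summing the entries over $k\ge 1$ gives $\sum_{k\ge 1}\h^0(N_{\Gamma_g/\P^{g-1}}(-k))=23-g$, which is \eqref{tg-fibre}; conceptually this equals $\h^0(X_{\Gamma_g},N_{X_{\Gamma_g}/\P^g}(-1))$ by \eqref{grading}, the expected number of moduli of $K3$ extensions of $\Gamma_g$, and also equals $\dim\hilb F_g-\dim\hilb C_g$.
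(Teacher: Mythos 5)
The paper does not actually prove this proposition: it is quoted verbatim from \cite{cm90}, so the only honest comparison is between your outline and the argument carried out there. Your outline does follow the same route (trivalent graphs, Bayer--Eisenbud very ampleness, a degenerate $K3$ which is a union of planes dual to the polyhedral graph, and a component-by-component analysis of $N_{\Gamma_g/\P^{g-1}}$ via elementary modifications at the nodes), and your Step 3 for $g=3,4,5$ via the complete-intersection structure and Riemann--Roch does reproduce the first three columns of the table. But as a proof the proposal has a genuine gap exactly where the content of the proposition lies: for $6\le g\le 9$ and $g=11$ you never produce the graphs nor carry out the ``genuine finite computation'' — the entries $16,16,15,14,12$ at $k=1$ and the vanishing for $k\ge 2$ (equivalently the corank computations for the Gaussian maps in \cite{cm90}) are asserted, not derived, and they depend on the specific graphs chosen, so ``choose $G_g$ general enough'' is not a substitute for exhibiting them and computing. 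Equation \eqref{tg-fibre} is then only verified in the easy range $g\le 5$.

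Two further points. In Step 2, the main route requires knowing that the union of planes $X_g$ actually lies in $\hilb B_g$, i.e.\ is a flat limit of smooth primitive $K3$ surfaces of genus $g$; this smoothability is itself a nontrivial ingredient of \cite{cm90}/\cite{clm93} (tied to the graph being the edge graph of a polyhedron) and you only gesture at it. In the alternative route you write $\h^0\left(N_{\Gamma_g/\P^{g-1}}\right)=g^2+4g-4$: the correct expected value is $g^2+3g-4$ (namely $3g-3+\dim\PGL(g,\C)$); $g^2+4g-4$ is $\dim\hilb C_g$, i.e.\ the count for the normal bundle in $\P^g$ of the degenerate curve, so the two projective spaces are being conflated. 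That alternative also needs an argument that the unique component of the Hilbert scheme through $[\Gamma_g]$ has smooth, non-degenerate general member (smoothing of all nodes), which again is part of what must be checked and not just stated. None of this is fatal to the strategy — it is the strategy of \cite{cm90} — but in its present form the proposal defers precisely the computations that the proposition encodes.
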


\subsection{Proof of the main theorem}\label{ssec:proof}

Here we prove Theorem \ref {theo:hilb} and 
therefore also Theorem \ref {theo:main}.
The first step is the following:

\begin{prop}
\label{prop:fibres}
Let $g$ and $h$ be two integers such that $3\le g\le
11$, $g\neq 10$, and $0\le h\le g$. Let $\hilb F$ be a 
component of $\hilb{F}_{g,h}$ and let $(X,C)\in \hilb F$ be a general
point.
Then all components of the fibre $F_C$ of $p_{g,h}$ over $C$ have
dimension $23-g$, and the restriction of $p_{g,h}$ to $\hilb F$
is dominant onto $\hilb C_{g,h}$.
\end{prop}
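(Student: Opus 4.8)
The plan is to compute the dimension of the fibre $F_C$ over a general point $C$ of $\hilb C_{g,h}$ by passing to a cone point, where the tangent space to $F_C$ is controlled by Pinkham's grading formula, and then to transport this control back to $C$ by semicontinuity and to the flag Hilbert scheme by a dimension count.

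First I would recall from \S\ref{sec:setting} that $\dim\hilb F_{g,h}=g^2+3g+19-\delta$ with $\delta=g-h$, that $\dim\hilb C_{g,h}=g^2+4g-4-\delta$, and that $p_{g,h}\colon\hilb F_{g,h}\to\hilb C_{g,h}$ has a natural section-free structure: the fibre $F_C$ of $p_{g,h}$ over $C$ equals the fibre of $p_g$ over $C$ (forgetting the nodal conditions does not change the fibre, only the base), and at a point $(X,C)$ this fibre has tangent space $\H^0(X,N_{X/\P^g}(-1))$. The generic fibre of $p_{g,h}$ restricted to a component $\hilb F$ therefore has dimension $\dim\hilb F-\dim\hilb C_{g,h}\ge (g^2+3g+19-\delta)-(g^2+4g-4-\delta)=23-g$, with equality exactly when the restriction of $p_{g,h}$ to $\hilb F$ is dominant; so the two assertions of the proposition — that every component of $F_C$ has dimension $23-g$ and that $p_{g,h}|_{\hilb F}$ is dominant — are equivalent, and both would follow from the single inequality $\dim_{(X,C)} F_C\le 23-g$ at a general point $(X,C)\in\hilb F$.

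To prove that inequality I would argue by degeneration to a cone. Let $(X,C)\in\hilb F$ be general with $X$ a smooth $K3$ surface (such points exist since $\hilb F$ dominates $\hilb B_g$). By Lemma~\ref{lem:cone} we may flatly degenerate, inside $F_C$, the pair $(X,C)$ to the cone point $(X_C,C)$; hence every component of $F_C$ through $(X,C)$ specializes into $F_C$ near $(X_C,C)$, and it suffices to bound $\dim_{(X_C,C)}F_C$. Since $F_C$ is contained in the fibre of $p_g$, whose tangent space at $(X_C,C)$ is $\H^0(X_C,N_{X_C/\P^g}(-1))$, Lemma~\ref{lem:grading} (with $i=1$) gives
\[
\dim_{(X_C,C)} F_C\le \h^0\!\left(X_C,N_{X_C/\P^g}(-1)\right)=\sum_{k\ge 1}\h^0\!\left(C,N_{C/\P^{g-1}}(-k)\right).
\]
Now $C$ is a general irreducible nodal degenerate canonical curve of arithmetic genus $g$ with $\delta$ nodes, and I would bound the right-hand side by semicontinuity: degenerate $C$ within $\hilb C_{g,h}$ (equivalently, within the family of reduced, connected, nodal, projectively Cohen--Macaulay canonical curves of arithmetic genus $g$) to the canonical graph curve $\Gamma_g$ of Proposition~\ref{prop:normal-graph}, which lies in the image of $p_g$. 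Upper semicontinuity of $k\mapsto h^0(N(-k))$ along this family then yields $\sum_{k\ge1}\h^0(C,N_{C/\P^{g-1}}(-k))\le\sum_{k\ge1}\h^0(\Gamma_g,N_{\Gamma_g/\P^{g-1}}(-k))=23-g$ by \eqref{tg-fibre}. Combining, $\dim_{(X,C)}F_C\le 23-g$, which as noted forces equality everywhere and dominance of $p_{g,h}|_{\hilb F}$.

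The main obstacle is the semicontinuity step: one must check that the general nodal curve $C$ of a component of $\hilb C_{g,h}$ actually degenerates, inside a flat family of \emph{reduced, projectively Cohen--Macaulay} curves to which Lemma~\ref{lem:grading} applies and along which the normal bundle and its twists behave semicontinuously, to (a curve of the same Hilbert polynomial as) $\Gamma_g$; equivalently, that $\Gamma_g$ lies in the closure of $\hilb C_{g,h}$ inside the relevant locus of $\hilb C_g$ — this is where the hypothesis that $\Gamma_g$ sits in the image of $p_g$, together with the irreducibility of $\hilb C_{g,h}$ established in \S\ref{sec:setting}, must be used, and where the excluded case $g=10$ (absent from the table of Proposition~\ref{prop:normal-graph}) enters. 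A secondary point to be careful about is that $F_C$ might a priori have embedded or non-reduced structure, so one should phrase the bound as a bound on the Zariski tangent space dimension at the relevant points and note that a component cannot have dimension exceeding the tangent-space dimension at its general point; since the cone point bound dominates, this causes no trouble.
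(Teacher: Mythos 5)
Your skeleton (cone degeneration, Pinkham's grading lemma, semicontinuity towards a graph curve, and the count $23-g=\dim\hilb F-\dim\hilb C_{g,h}$) is the right one, but the central step has a genuine gap, in fact two related ones. Your semicontinuity argument compares $\sum_{k\ge1}\h^0\!\left(C,N_{C/\P^{g-1}}(-k)\right)$ with the value at $\Gamma_g$ \emph{inside} $\hilb C_{g,h}$, and for this you need the specific curve $C$ to lie outside the jumping locus, i.e.\ to be a general point of $\hilb C_{g,h}$. But the hypotheses only give that $(X,C)$ is general in the component $\hilb F$, hence that $C$ is general in $\overline{p_{g,h}(\hilb F)}$; that this closed set is all of $\hilb C_{g,h}$ is exactly the dominance you are trying to prove, so the argument is circular. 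Moreover, even the bound at the generic point of $\hilb C_{g,h}$ requires $\Gamma_g\in\hilb C_{g,h}$, i.e.\ that the graph curve is a limit of \emph{irreducible} nodal canonical curves with exactly $\delta$ nodes; the ingredients you invoke ($\Gamma_g$ lies in the image of $p_g$, irreducibility of $\hilb C_{g,h}$) only give $\Gamma_g\in\hilb C_g=\hilb C_{g,g}$ and say nothing about membership in $\hilb C_{g,h}$ for $h<g$ (that would need an argument on partial smoothings of the $3g-3$ nodes of $\Gamma_g$).

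The paper avoids both problems by an observation your proposal misses: the tangent space $\H^0\!\left(X,N_{X/\P^g}(-1)\right)$ to the fibre of $p_g$ at $(X,C)$ depends only on $X$, not on the chosen hyperplane section. Since every component of $\hilb F_{g,h}$ dominates $\hilb B_g$, the surface $X$ genuinely is general, so one may replace the nodal section $C$ by a general \emph{smooth} hyperplane section $\bar C$ of $X$, which is general in $\hilb C_g$ by the $h=g$ case of Theorem~\ref{theo:fkps}; one then degenerates $(X,\bar C)$ to the cone over $\bar C$ and degenerates $\bar C$ to $\Gamma_g$ inside the irreducible $\hilb C_g$, where both the genericity and the membership of $\Gamma_g$ are unproblematic. (The cone over the nodal curve and the claim $\Gamma_g\in\hilb C_{g,h}$ are used only later, in the proof of Theorem~\ref{theo:hilb}, where $C$ really is chosen general in $\hilb C_{g,h}$ because dominance is by then available.) The fact that your argument never invokes Theorem~\ref{theo:fkps} is a symptom: the needed genericity of the curve has been assumed rather than established. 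Your preliminary reductions are acceptable at the paper's level of detail — the equivalence of the two assertions, and the fact that every component of $F_C$ through $(X,C)$ contains the cone point, which holds because the components of $F_C$ are stable under the $\C^*$-action given by the $\omega_t$ of Lemma~\ref{lem:cone}.
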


\begin{proof}
Note that $X$ is general in $\hilb B_g$ (see \S
\ref {sec:setting}).  
As we saw,  $F_C$  equals
 the fibre of $p_g$,
  whose tangent space at $(X,C)$ is isomorphic to
$\H^0(X,N_{X/ \P^g}(-1))$. Its dimension does not depend on
the hyperplane section $C$ of $X$.  
So this is like computing the tangent space to the fibre of $p_g$ at a
general point $(X,\bar C)$ of $\hilb F_g$, with $\bar{C}$ 
general in $\hilb{C}_g$ by the case $h=g$ of Theorem \ref{theo:fkps}.

By degenerating to the cone point $(X_{\bar C},\bar C)$, by Lemma \ref
{lem:grading}, and by upper--semi--continuity, we have $h^0(X,N_{X/
  \P^g}(-1))\le \sum_{k\ge 1} h^0(\bar C,N_{\bar C/
  \P^{g-1}}(-k))$. By further degenerating 
to one of the graph curves in Proposition \ref {prop:normal-graph},
and taking into account \eqref {tg-fibre}, we have 
$h^0(X,N_{X/ \P^g}(-1))\leq 23-g$ (this argument has been extracted from
  \cite [\S 5.3] {clm93}).  So this is an upper bound for the 
dimension of $F_C$ at $(X,C)$. 
Since 
\[
23-g=\dim\left(\hilb{F}\right) - \dim\left(\hilb{C}_{g,h}\right)
\]
(see \S \ref {sec:setting}),  this equals the dimension of $F_C$ at
$(X,C)$, and the restriction 
of $p_{g,h}$ at $\hilb F$ is dominant. \end{proof}\medskip

With a similar argument  we can finish the:\medskip

\begin{proof}[Proof of Theorem \ref {theo:hilb}]
Let  $\hilb F_i$, $1\le i\le 2$, be distinct components of
$\hilb{F}_{g,h}$. 
Let $C\in \hilb C_{g,h}$ be a general point. By Proposition
\ref {prop:fibres}, there are points $(X_i,C)\in \hilb F_i$, 
and they can be assumed to be general points on two  distinct 
components $F_i$ of $F_C$, $1\le i\le 2$.
 By Lemma \ref{lem:cone}, both
$F_1$ and $F_2$  contain the cone point $(X_C,C)$. We will reach a
contradiction by showing that $(X_C,C)$ is a smooth point of $F_C$.

Since $C$ is general in $\hilb C_{g,h}$ and 
$\hilb C_{g,h}$ clearly contains the graph curves $\Gamma_g$
of Proposition \ref {prop:normal-graph}, by upper--semicontinuity 
$h^0(X_C,N_{X_C/ \P^g}(-1))$ is bounded from above by \eqref {tg-fibre}.
This proves the asserted smoothness of $F_C$ at $(X_C,C)$, concluding
the proof. \end{proof}

\begin{rem} 
\begin{inparaenum}[(i)]
\item Proposition \ref  {prop:fibres} gives a quick alternative
proof of the part $h<g$ of Theorem \ref {theo:fkps} when $g\neq 10$,
which is based on the part $h=g$.\\
\item The argument does not work for $g=10$. In fact, if $C$ is any
  curve in the image of $p_{10}$ lying on a smooth $K3$ surface,
one has 
$\h^0\left(X,N_{X/\P^g}(-1)\right)=14$,
see \cite [Lemma 1.2] {cukierman-ulmer}.
The analogue of Proposition \ref
{prop:fibres}  in this case is that all components of a general fibre of
$p_{10,h}$  have dimension $14$. 
So the image of $p_{g,h}$ has codimension 1 in $\hilb C_{g,h}$.
Luckily, and as one could expect, Theorem  \ref {theo:fkps} ensures
that the moduli map $c_{g,h}$ dominates $\mod{M}_h$ for $0\le h\le
9$.
However the argument in the final part of the proof of Theorem \ref
{theo:hilb} falls short, since we do not know whether the image of
$p_{10,h}$ is irreducible, or all of its components contain a curve
$C$ for which the fibre $F_C$ can be controlled.
\end{inparaenum}
\end{rem}

{\small

}

{\small
\vskip .2cm \noindent
\textsc{%
Dipartimento di Matematica, 
Universit\`a degli Studi di Roma Tor Vergata,
Via della Ricerca Scientifica,
 00133 Roma, Italy} \\
\texttt{cilibert@mat.uniroma2.it}

\vskip .4cm \noindent
\textsc{%
Universit\'e Paul Sabatier, Institut de Math\'ematiques de Toulouse,
118 route de Narbonne,
 31062 Toulouse Cedex 9, France} \\
\texttt{thomas.dedieu@math.univ-toulouse.fr}
}

\end{document}